\documentclass[11pt]{article}

\usepackage{amsmath}

\usepackage{times}
\usepackage{bm}


\usepackage[plain,noend]{algorithm2e}

 \usepackage{multirow,verbatim}
 \usepackage{latexsym, amsbsy, amssymb}
 \usepackage{natbib}
 \usepackage{graphicx}
 \usepackage{booktabs}
 \usepackage{bbm}
 \usepackage{color}

 \usepackage{float} 
 \usepackage{hyperref}
\hypersetup{colorlinks=true,linkcolor=blue,urlcolor=blue,citecolor=blue}

\usepackage[caption=false,listofformat=subsimple,labelformat=simple]{subfig}


\def\T{{ \mathrm{\scriptscriptstyle T} }}

\addtolength\topmargin{35pt}


\DeclareMathOperator*{\argmin}{argmin}

\def\var{\mathrm{var}}



\setlength{\textheight}{8.40in}
\setlength{\textwidth}{6in}
\setlength{\topmargin}{-.2in}
\setlength{\headheight}{0cm}
\setlength{\oddsidemargin}{0.25in}

\newtheorem{proposition}{{\bf Proposition}}

\newtheorem{definition}{{\bf Definition}}
\newtheorem{theorem}{{\bf Theorem}}

\newcommand\ca[1]{{\cal{#1}}}
\newcommand\lo[1]{_{\nano{#1}}}
\newcommand\hi[1]{^{\nano{#1}}}
\newfont{\rsfsten}{rsfs10 scaled 1100}
\newfont{\rsfstena}{rsfs10 scaled 800}
\newfont{\rsfstenb}{rsfs10 scaled 800}

\def\L{{\cal L}}

\def\var{\mathrm{var}}

\def\ran{\mathrm{ran}}

\def\nano{\scriptscriptstyle}

\def\L2T{L \lo 2 (T)}
\def\L2TX{L \lo 2 (T\lo X)}
\def\L2TX{L \lo 2 (T\lo Y)}


\begin{document}

\title{\bf On Exact Bayesian Credible Sets for Classification and Pattern Recognition}
\author{Chaegeun Song and Bing Li\\ 
    Department of Statistics, The Pennsylvania State University\\
    cqs6027@psu.edu \quad bxl9@psu.edu}

\date{}   

\maketitle

\vspace{-18pt}

\maketitle


\vspace{.0in}
\newcommand\red[1]{{\color{red}{#1}}}


\begin{abstract}
The current definition of a Bayesian credible set cannot, in general, achieve an arbitrarily preassigned credible level. This drawback is particularly acute for classification problems, where there are only a finite number of achievable credible levels. As a result, there is as of today no general way to construct an exact credible set for classification. In this paper, we introduce a generalized credible set that can achieve any preassigned credible level. The key insight is a simple connection between the Bayesian highest posterior density credible set and the Neyman--Pearson lemma, which, as far as we know, hasn't been noticed before. Using this connection, we introduce a randomized decision rule to fill the gaps among the discrete credible levels. Accompanying this methodology, we also develop the Steering Wheel Plot to represent the credible set, which is useful in visualizing the uncertainty in classification. By developing the exact credible set for discrete parameters, we make the theory of Bayesian inference more complete. 
\end{abstract}

\noindent{\em Keywords}. Bayesian classification; Highest posterior density set; Neyman--Pearson lemma; Pattern recognition.
\medskip
\medskip

\newcommand\acuteout[1]{$\acute{\rm{#1}}$}
\newcommand\ddotout[1]{$\ddot{\rm{#1}}$}
\def\ali{&\,}
\newcommand\seq[1]{\{ {#1} \lo k \}}
\def\spn{{\rm{span}}}
\def\nil{\hi {0}}
\def\tsum{\textstyle{\sum}}
\def\tsum{{\textstyle{\sum}}}
\def\cclass{\frak{M} _ {\ca G \lo {Y|X}}}
\def\spn{{\rm span}}
\def\one{{\mathbbm 1}}
\def\oc{\hi {\perp}}
\def\cran{\overline{\ran}}
\def\cran{\overline{\ran}}
\def\mpinv{\hi {\dagger}}
\def\cran{\overline{\ran}}
\def\adj{\hi *}


\section{Introduction}

The credible set is one of the most useful tools in Bayesian inference to quantify the uncertainty in Bayesian estimation. See, for example, \citet{berger1985statistical}, and \citet{li2019graduate}. It plays a similar role as the confidence set in the frequentist setting. The most commonly used credible set is the highest posterior density credible set, which has the shortest length for a given credible level. 

However, the classical definition of the highest posterior density credible set \citep{berger1985statistical} is useful mostly for continuous parameters\textemdash the parameter supported on an interval or a set with a nonempty and connected interior. In the discrete case where the parameter is supported on a finite or a countable set, the highest posterior density credible set may not exist for a prescribed credible level. This problem is particularly acute for Bayesian classification where the highest posterior density credible set almost never exists for a given credible level. To our knowledge, there has been no systematic method to construct the shortest credible set for such problems. 

Recent studies on the credible set have focused on frequentist coverage probability rather than the credible level. \citet{fang2006empirical} developed the asymptotic frequentist coverage of credible sets. See also \citet{hutchinson2020improving}. The credible intervals over a constrained range and their frequentist coverage probabilities have been studied by \citet{roe2000setting}, \citet{zhang2003credible} and \citet{marchand2013bayesian}.  \citet{szabo2015frequentist} worked on frequentist coverage of adaptive nonparametric credible sets. \citet{belitser2017coverage} constructed credible balls, and \citet{syring2019calibrating} considered calibration method to achieve prescribed coverage probability. 

Bayesian classification has been widely used in many contemporary scientific studies such as cancer research, genomics, and healthcare applications. See, for example, \citet{mallick2005bayesian}, \citet{sambo2012bag},\citet{bhuvaneswari2012naive},\citet{knight2014mcmc}, \citet{tavtigian2018modeling}, and \citet{ibeni2019comparative}. Thus, the need for precise uncertainty quantification for Bayesian classification is prevalent. It is high time to facilitate Bayesian classification with exact inference tools and bring it up to par with Bayesian estimation for continuous parameters, where exact credible sets have long been available and widely used. The current paper is one step towards satisfying this need. 

Our extension is based on the following insight: the process of identifying the highest posterior density credible set is rather similar to that of constructing the most powerful test based on the Neyman--Pearson lemma, and the latter can accommodate any prescribed significance level\textemdash even for discrete random variables\textemdash by strategically placing probability masses at some transitioning points. This consideration leads us to define the generalized highest posterior density credible set not as a set in parameter space, but as a mapping from the parameter space to the closed interval $[0,1]$. By this adjustment, there always exists a unique shortest credible set for any prescribed credible level, regardless of the nature of the parameter. Accompanying this methodology, we also introduce a visualization tool for classification, called Steering Wheel Plot.

\section{Generalized credible set}

Suppose $(\Omega, \mathcal{F}, P)$ is a probability space, and $(\Omega_X, \mathcal{F}_X, \mu_X)$, $(\Omega_\Theta, \mathcal{F}_\Theta, \mu_\Theta)$ are $\sigma$-finite measure spaces. Suppose $X: \Omega \to \Omega_X$ is a random element measurable with respect to $\mathcal{F}/ \mathcal{F}_X$, and $\Theta: \Omega \to \Omega_\Theta$ a random element measurable with respect to $\mathcal{F} / \mathcal{F}_\Theta$. For convenience, we assume that the pair $(X, \Theta)$ takes values in the product measure space $(\Omega_X \times \Omega_\Theta, \mathcal{F}_X \times \mathcal{F}_\Theta, \mu_X \times \mu_\Theta)$. Here, the random element $X$ represents the data, and the random element $\Theta$ represents the parameter. We will later focus on the case where $\Theta$ is supported on a finite set (i.e. cardinality of $\Omega_\Theta$ is finite), but there is no need to make such an assumption to state our general result.

 Let
\begin{align*}
P_\Theta = P \circ \Theta^{-1}, \quad P_X = P \circ X^{-1}, \quad P_{X\Theta} = P \circ (X, \Theta)^{-1}
\end{align*}
be the distributions $\Theta$, $X$, and $(X, \Theta)$, respectively. Assume that these distributions are dominated by $\mu_\Theta$,  $\mu_X$ and $\mu_X \times \mu_\Theta$, respectively. 
 Let $\pi_\Theta$, $f_X$ and $f_{X \Theta}$ be the density of $P_\Theta$, $P_X$ and $P_{X\Theta}$ with respect to their dominating measures. Let
\begin{align*}
\pi_{\Theta \mid X} (\theta \mid x)
=
\begin{cases}
f_{X\Theta} (x, \theta)/ f_X(x) & \text{if} \quad f_X(x)  > 0 \\
0 & \text{if} \quad f_X (x) = 0, 
\end{cases}
\\
f_{X \mid \Theta} (x \mid \theta )
=
\begin{cases}
f_{X\Theta} (x, \theta)/ \pi_\Theta (\theta)  & \text{if} \quad \pi_\Theta (\theta)   > 0 \\
0 & \text{if} \quad \pi_\Theta (\theta) = 0.
\end{cases}
\end{align*}
In the usual Bayesian terminology, $\pi_\Theta$ is the prior density of $\Theta$, $f_X$ is the marginal density of $X$, $f_{X \Theta}$ is the joint density of $(X, \Theta)$, $\pi_{\Theta \mid X}$ is the posterior density, and $f_{X \mid \Theta}$ is the likelihood.

The classical definitions of a credible set and the highest posterior density credible set proceed as follows (See, for example, \citealp{berger1985statistical}, page 140; and  \citealp{li2019graduate}, page 180). For an observed value $x$ of $X$ and $0< \alpha < 1$, a $100(1-\alpha)$\% credible set for $\Theta$ is any $C \in \mathcal{F}_\Theta$ such that 
\begin{align*}
    P\{\Theta^{-1} (C) \mid X = x\} \geq 1-\alpha.
\end{align*} 
To define the $100(1-\alpha)$\% highest posterior density credible set for $\Theta$, let 
\begin{align*}
    C(\kappa) = \{ \theta \in \Omega_\Theta : \pi_{\Theta \mid X} (\theta \mid x) \geq \kappa \}
\end{align*}
for $\kappa \geq 0$. The set-valued function $C(\kappa)$ is decreasing in the sense that if $\kappa_1 < \kappa_2$, then $C(\kappa_2) \subseteq C(\kappa_1)$. Let $\kappa_\alpha = \sup \{ \kappa : P \{ C(\kappa) \mid x\} \geq 1-\alpha \}$. Then the $100(1-\alpha)$\% highest posterior density credible set is defined as $C(\kappa_\alpha)$. It can be shown that the $\mu_\Theta$-measure of $C(\kappa_\alpha)$ is minimal among any $100(1-\alpha)$\% credible set. See the Theorem 6.4 in \cite{li2019graduate}.

\begin{figure}[]
\centering
\subfloat[]{%
  \includegraphics[width=0.45\textwidth]{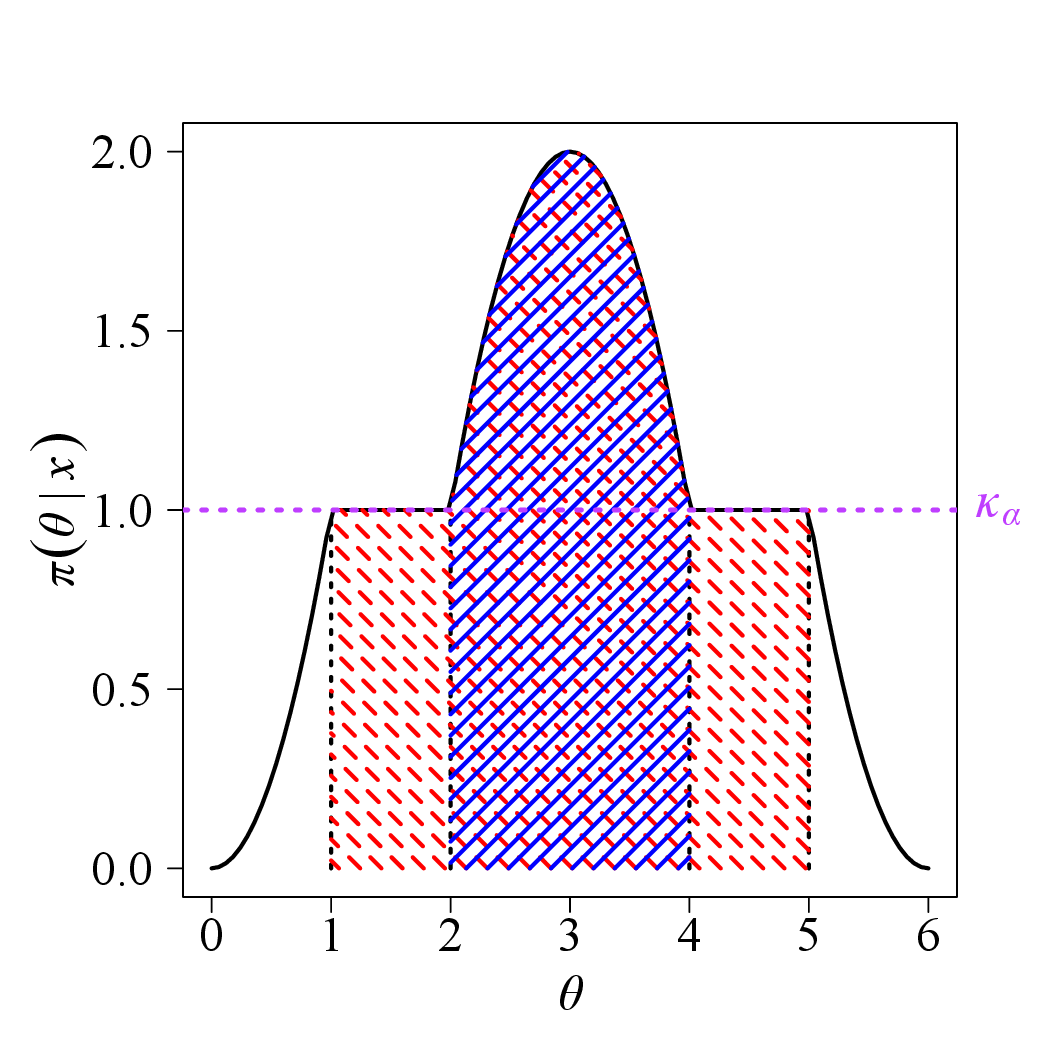}\label{fig:flatdistn}%
}%
\subfloat[]{%
  \includegraphics[width=0.45\textwidth]{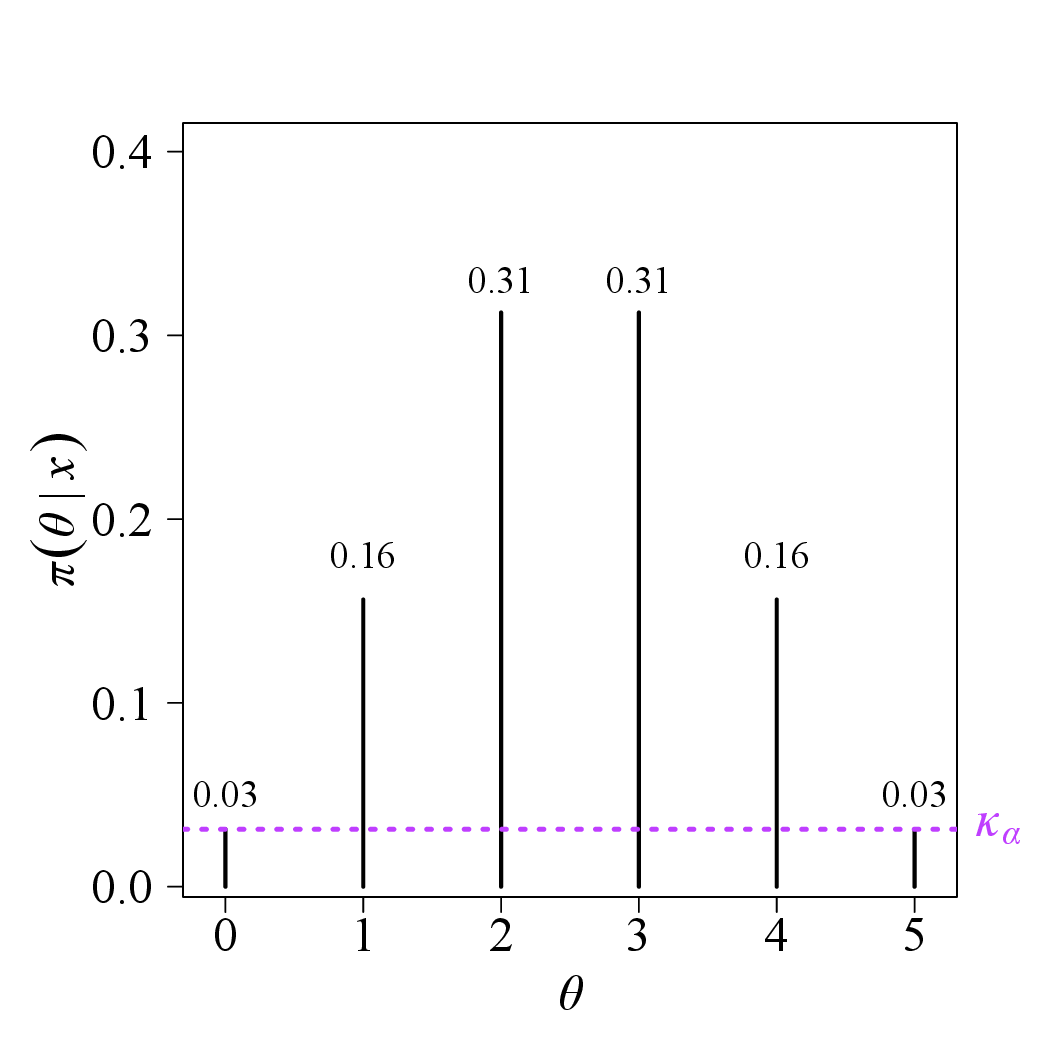}\label{fig:discdistn}%
}
\caption{Failure of having exact $100(1-\alpha)$\% highest posterior density credible set. (a) Posterior density function having flat spots with $\alpha=0.35$ so that $1-\alpha$ is larger than the blue-shaded area (solid) but smaller than the red-shaded area (dashed); 
(b) Posterior mass function with $\alpha=0.05$.
}
\label{fig:highest posterior densityfail}
\end{figure}

To see why $C(\kappa_\alpha)$ doesn't always exist, consider the posterior density 
\begin{align*}
   \pi (\theta \mid x ) =(3/20)[\theta^2\mathbbm{1}_{[0,1]} (\theta)+\mathbbm{1}_{\{(1,2) \cup (4,5)\}} (\theta)-\{(\theta-3)^2+2\}\mathbbm{1}_{[2,4]}+(\theta-6)^2\mathbbm{1}_{[5,6]} (\theta)] 
\end{align*}
where $\mathbbm{1}_A(\cdot)$ is an indicator function of set $A$ shown in Fig.~\ref{fig:flatdistn}. Suppose $\alpha=0.35$. Then, $\kappa_\alpha = 1$ corresponds to the flat spot in the posterior density. However, since 
\begin{align*}
    P\{ C(\kappa_\alpha) \mid x\} >  1-\alpha, 
\end{align*}
the prescribed credible level is not met by the highest posterior density credible set. 

This situation occurs most often when $\theta$ is discrete. Figure~\ref{fig:discdistn} shows the posterior mass function of a discrete random parameter, supported on a finite set. Specifically, consider the scenario where $\Theta \mid X \sim \mathrm{Bin}(5, 1/2)$. Assume that $\alpha = 0.05$ and we are interested in 95\% highest posterior density credible set. Here, $\kappa_\alpha = 0.03125$ is represented by the dashed line, which is of the same height as the posterior mass function at $\theta = 0$ or $\theta = 5$. The highest posterior density credible set $C(\kappa_\alpha)$ is the entire parameter space $\{ 0, 1, 2, 3, 4, 5 \}$ whose credible level is 100\%. Again, it fails to attain the exact 95\% credible level.  

These counterexamples lead us to define the following generalized credible set. Henceforth, for a function $\phi: \Omega_\Theta  \times \Omega_X \to [0,1]$, we write $\phi ( \theta, x)$ as $\phi (\theta \mid x)$ to emphasize the conditional nature of the variable $X$.

\begin{definition} \label{def:GCS}
For an $\alpha \in (0,1)$, we call any measurable function $\phi: \Omega_\Theta \times \Omega_X \to [0,1]$ such that
\begin{align*}
    E\{ \phi (\Theta \mid X) \mid X \}= 1 - \alpha \quad \text{a.s.}\quad P_X
\end{align*}
a $(1-\alpha)$-level generalized credible set for $\theta$.
\end{definition} 

The generalized credible set is a mapping from $\Omega_\Theta \times \Omega_X$ to the interval $[0, 1]$. In contrast, the classical credible set, which is a subset of $\Omega_\Theta$, can be viewed as a mapping $\phi$ from $\Omega_\Theta \times \Omega_X$ to the set $\{ 0, 1\}$; that is, $\phi$ is the indicator function of the credible set. Since $\{ 0, 1\} \subseteq [0, 1]$, the generalized credible set indeed generalizes the credible set. 

\section{Generalized highest posterior density credible set}
As mentioned earlier,  the highest posterior density credible set is the smallest credible set in the sense that it has the smallest $\mu_\Theta$-measure among all credible sets of the same level. More precisely, if $C ( \kappa_\alpha)$ is the highest posterior density credible set of level $1-\alpha$ and $C$ is any other credible set of level $1-\alpha$, then $\mu_{\Theta} \{C ( \kappa_\alpha)\} \le \mu_\Theta ( C)$. In this section, we extend this optimal property to generalized credible sets. We first generalize the highest posterior density credible set.

\begin{definition}\label{def:GHPD}
If there is a constant $\kappa_\alpha (x) > 0$ such that the generalized credible set 
\begin{equation}\label{eq:GHPD}
\begin{aligned}
\phi^* ( \theta \mid x ) = 
\begin{cases}
1 & \mbox{if} \ \pi_{\Theta \mid X} (\theta \mid x) > \kappa_\alpha (x) \\
\gamma (\theta \mid x) & \mbox{if} \ \pi_{\Theta \mid X} (\theta \mid x) = \kappa_\alpha (x) \\
0 & \mbox{if} \ \pi_{\Theta \mid X} (\theta \mid x) < \kappa_\alpha (x)
\end{cases}
\end{aligned}
\end{equation}
satisfies $E\{ \phi^* (\Theta \mid X) \mid X \} = 1-\alpha$, then $\phi^*$ is called the level $1-\alpha$ generalized highest posterior density credible set, or $(1-\alpha)$-level generalized highest posterior density credible set. 
\end{definition}

If we take $\gamma (\theta \mid x)$ to be constantly 1, the above function is nothing but the indicator function of the set $\{ \theta: \pi_{\Theta \mid X} ( \theta \mid x) \ge \kappa_\alpha \}$, which is the definition of the highest posterior density credible set. The difference is that for the highest posterior density credible set we can only guarantee $P [ \Theta^{-1} \{ C ( \kappa_\alpha ) \} \mid X] \ge 1- \alpha$, where the equality may not be achieved. In comparison, as we will show in Theorem \ref{thm:exuniq}, the exact level $1-\alpha$ can always be achieved by the generalized highest posterior density credible set. 

Now that the measure $\mu_\Theta (C)$ is replaced by the integral $\int \phi (\theta \mid x) d \mu_\Theta (\theta)$, the size of a generalized credible set has a new meaning: we simply use $\int \phi (\theta \mid x) d \mu_\Theta (\theta)$ to define the size of a generalized credible set $\phi (\theta \mid x)$. The next two theorems show that the generalized highest posterior density credible set is the smallest generalized credible set, that it exists, and that it is unique. The proofs of the theorems echo the proof of the Neyman--Pearson lemma. 

\begin{theorem}\label{thm:smallest}
If $\phi^*: \Omega_\Theta \times \Omega_X \to [0,1]$ is the level $1-\alpha$ generalized highest posterior density credible set, then it is the smallest level $1-\alpha$ generalized credible set; that is, for any $\phi: \Omega_\Theta \times \Omega_X \to [0,1]$ satisfying $E \{ \phi (\Theta \mid X ) \mid X \} = 1- \alpha$, we have 
\begin{align*}
\int \phi^* ( \theta \mid x) d \mu_\Theta (\theta) \le \int \phi ( \theta \mid x) d \mu_\Theta (\theta). 
\end{align*}
\end{theorem}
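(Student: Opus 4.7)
The plan is to mimic the Neyman--Pearson lemma's standard argument, since $\phi^*$ has exactly the thresholding form of a most powerful test, with the posterior density $\pi_{\Theta\mid X}(\theta\mid x)$ playing the role of the likelihood ratio, the dominating measure $\mu_\Theta$ playing the role of the null measure, and the constraint $E\{\phi(\Theta\mid X)\mid X\}=1-\alpha$ playing the role of the size constraint. The whole argument will be carried out pointwise in $x$ (holding $x$ fixed), since the claim is about an inequality of two integrals over $\theta$, each depending on $x$ as a parameter.

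First I would fix an $x$ with $\kappa_\alpha(x)>0$ and examine the sign of the pointwise product
\begin{align*}
\Delta(\theta\mid x)\;=\;\bigl(\phi^{*}(\theta\mid x)-\phi(\theta\mid x)\bigr)\bigl(\pi_{\Theta\mid X}(\theta\mid x)-\kappa_\alpha(x)\bigr).
\end{align*}
I would split into three cases according to the definition \eqref{eq:GHPD}. If $\pi_{\Theta\mid X}(\theta\mid x)>\kappa_\alpha(x)$, then $\phi^{*}(\theta\mid x)=1\ge\phi(\theta\mid x)$, so both factors are nonnegative. If $\pi_{\Theta\mid X}(\theta\mid x)<\kappa_\alpha(x)$, then $\phi^{*}(\theta\mid x)=0\le\phi(\theta\mid x)$, so both factors are nonpositive. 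If equality holds, the second factor is zero. In all three cases $\Delta(\theta\mid x)\ge 0$ for every $\theta\in\Omega_\Theta$.

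Next I would integrate with respect to $\mu_\Theta$ and expand:
\begin{align*}
0\;\le\;\int\Delta(\theta\mid x)\,d\mu_\Theta(\theta)
\;=\;\int\bigl(\phi^{*}-\phi\bigr)\pi_{\Theta\mid X}\,d\mu_\Theta
\;-\;\kappa_\alpha(x)\int\bigl(\phi^{*}-\phi\bigr)\,d\mu_\Theta.
\end{align*}
The first integral on the right is $E\{\phi^{*}(\Theta\mid X)\mid X=x\}-E\{\phi(\Theta\mid X)\mid X=x\}$, which equals $(1-\alpha)-(1-\alpha)=0$ by the defining property of a $(1-\alpha)$-level generalized credible set together with the hypothesis that $\phi^{*}$ attains that level. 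Hence
\begin{align*}
\kappa_\alpha(x)\int\bigl(\phi^{*}(\theta\mid x)-\phi(\theta\mid x)\bigr)\,d\mu_\Theta(\theta)\;\le\;0,
\end{align*}
and since $\kappa_\alpha(x)>0$ we may divide through to obtain the desired $\int\phi^{*}(\theta\mid x)\,d\mu_\Theta(\theta)\le\int\phi(\theta\mid x)\,d\mu_\Theta(\theta)$.

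I do not anticipate a serious obstacle here: the argument is essentially algebraic once the thresholding form of $\phi^{*}$ is in hand. The one point that requires care is the translation from the conditional expectation identity $E\{\phi(\Theta\mid X)\mid X\}=1-\alpha$ (holding a.s.\ $P_X$) to the integral identity against $\pi_{\Theta\mid X}(\cdot\mid x)\,d\mu_\Theta$ for a fixed $x$; this follows from the very definition of the posterior density and holds for $P_X$-almost every $x$, so the final inequality is established for $P_X$-almost every $x$, which is all that is needed. Joint measurability of $\phi^{*}$, required to make all the integrals meaningful, follows from the measurability of $\pi_{\Theta\mid X}$, of $\kappa_\alpha(x)$, and of the randomization function $\gamma$.
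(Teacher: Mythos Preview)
Your proposal is correct and follows essentially the same Neyman--Pearson argument as the paper: verify the pointwise nonnegativity of $(\phi^{*}-\phi)(\pi_{\Theta\mid X}-\kappa_\alpha)$, integrate against $\mu_\Theta$, use the level constraint to kill the $\pi_{\Theta\mid X}$-term, and divide by $\kappa_\alpha(x)>0$. Your write-up is in fact slightly more explicit than the paper's about the three-case sign check and the a.s.\ $P_X$ caveat, but the method is the same.
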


\begin{proof}
Let $\phi$ be any $(1 - \alpha)$-level generalized credible set. Then, by expression \eqref{eq:GHPD}, 
\begin{align*}
\{\phi^* (\theta \mid x) - \phi (\theta \mid x)\}\{\pi_{\Theta \mid X} ( \theta \mid x) - \kappa_\alpha (x)\} \ge 0 
\end{align*}
for all $\theta \in \Omega _\Theta$ and $x \in \Omega _X$. Therefore,
\begin{align*}
\int_{\Omega_\Theta} \{\phi^* (\theta \mid x) - \phi (\theta \mid x)\}\{\pi_{\Theta \mid X} ( \theta \mid x) - \kappa_\alpha (x)\} d \mu_\Theta (\theta)  \ge 0.
\end{align*}
It follows that
\begin{align*}
\kappa_\alpha (x) \int_{\Omega_\Theta} \{\phi^* (\theta \mid x) - \phi (\theta \mid x)\} d \mu_\Theta (\theta)
\le \int_{\Omega_\Theta} \{\phi^* (\theta \mid x) - \phi (\theta \mid x)\} \pi_{\Theta \mid X} ( \theta \mid x)d \mu_\Theta (\theta) = 0.
\end{align*}
Because $\kappa_\alpha(x) > 0$, we have the desired result.
\end{proof}

We next show the existence of the generalized highest posterior density credible set and the uniqueness of the smallest generalized credible set. 

\begin{theorem} \label{thm:exuniq}
For any $ 0 < \alpha < 1$ and $x \in \Omega_X$, there exists a $(1-\alpha)$-level generalized highest posterior density credible set $\phi^*$ with $\gamma ( \theta \mid x)=\gamma (x)$. Furthermore, if $\phi^\prime$ is any smallest generalized credible set of level $1-\alpha$, then it has the form \eqref{eq:GHPD} a.s. $P_X$. That is, 
\begin{align*}
    P_{\Theta \mid X} \{ \phi^* (\theta \mid x) \neq \phi^\prime (\theta \mid x),\; \pi_{\Theta \mid X} ( \theta \mid x) \neq \kappa_\alpha (x) \mid X =x \} =0 \quad a.s. \quad P_X.
\end{align*}
\end{theorem}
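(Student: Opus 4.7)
The plan is to adapt the existence-and-uniqueness half of the Neyman--Pearson lemma, with the posterior density $\pi_{\Theta\mid X}(\cdot\mid x)$ playing the role of the likelihood ratio and the posterior measure $P_{\Theta\mid X}(\cdot\mid x)$ playing the role of the null distribution. Fix $x\in\Omega_X$ and introduce the posterior survival function of the density level,
\[
G(\kappa) = P_{\Theta\mid X}\bigl\{\pi_{\Theta\mid X}(\Theta\mid x) > \kappa \,\big|\, X=x\bigr\}.
\]
First I would record three facts about $G$: it is non-increasing and right-continuous, with left limit $G(\kappa-)=P_{\Theta\mid X}\{\pi_{\Theta\mid X}(\Theta\mid x)\geq\kappa\mid X=x\}$; $G(\kappa)\to 0$ as $\kappa\to\infty$; and $G(0+)=1$, because $P_{\Theta\mid X}\{\pi_{\Theta\mid X}(\Theta\mid x)=0\mid X=x\}=\int_{\{\pi_{\Theta\mid X}(\theta\mid x)=0\}}\pi_{\Theta\mid X}(\theta\mid x)\,d\mu_\Theta(\theta)=0$.

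With these in hand I set
\[
\kappa_\alpha(x) = \inf\bigl\{\kappa>0 : G(\kappa) \leq 1-\alpha\bigr\}.
\]
Since $G(0+)=1>1-\alpha$ and $G(\kappa)\to 0$, this value is strictly positive and finite, and monotonicity together with right-continuity give $G(\kappa_\alpha(x))\leq 1-\alpha \leq G(\kappa_\alpha(x)-)$. Define
\[
\gamma(x) = \frac{(1-\alpha)-G(\kappa_\alpha(x))}{G(\kappa_\alpha(x)-)-G(\kappa_\alpha(x))}
\]
when the denominator is positive, and $\gamma(x)=0$ otherwise; note $\gamma(x)\in[0,1]$. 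Plugging this constant $\gamma$ into \eqref{eq:GHPD} and computing gives
\[
E\{\phi^*(\Theta\mid X)\mid X=x\} = G(\kappa_\alpha(x)) + \gamma(x)\bigl\{G(\kappa_\alpha(x)-)-G(\kappa_\alpha(x))\bigr\} = 1-\alpha,
\]
which establishes existence with $\gamma$ a function of $x$ alone.

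For uniqueness I reuse the pointwise inequality from the proof of Theorem~\ref{thm:smallest}. Let $\phi'$ be any smallest $(1-\alpha)$-level generalized credible set, so that $\int\phi'\,d\mu_\Theta=\int\phi^*\,d\mu_\Theta$ and $E\{\phi'(\Theta\mid X)\mid X=x\}=1-\alpha$. Combining
\[
\{\phi^*(\theta\mid x)-\phi'(\theta\mid x)\}\{\pi_{\Theta\mid X}(\theta\mid x)-\kappa_\alpha(x)\}\geq 0
\]
with
\[
\int_{\Omega_\Theta}\{\phi^*-\phi'\}\{\pi_{\Theta\mid X}-\kappa_\alpha(x)\}\,d\mu_\Theta = 0,
\]
which follows by expanding the product and using the two preceding equalities, forces the non-negative integrand to vanish $\mu_\Theta$-a.e. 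Hence $\phi^*=\phi'$ $\mu_\Theta$-a.e.\ on $\{\pi_{\Theta\mid X}(\cdot\mid x)\neq\kappa_\alpha(x)\}$, and since $dP_{\Theta\mid X}(\cdot\mid x)=\pi_{\Theta\mid X}(\cdot\mid x)\,d\mu_\Theta$, every $\mu_\Theta$-null subset of this event is also $P_{\Theta\mid X}$-null, which is the stated conclusion. The subtlest step I anticipate is the joint measurability of $\kappa_\alpha(x)$ and $\gamma(x)$ in $x$, needed to make $\phi^*$ a bona fide generalized credible set under Definition~\ref{def:GCS}; this should follow routinely from the measurability of $G$ in both arguments via a monotone approximation.
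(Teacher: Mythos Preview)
Your proposal is correct and follows essentially the same Neyman--Pearson-style argument as the paper: the function $G$ is the paper's $\rho(\cdot\mid x)$, your choice of $\kappa_\alpha(x)$ and $\gamma(x)$ matches theirs, and the uniqueness step via the nonnegative integrand $\{\phi^*-\phi'\}\{\pi_{\Theta\mid X}-\kappa_\alpha(x)\}$ having zero $\mu_\Theta$-integral is identical. The only differences are cosmetic: you supply a self-contained reason for $\kappa_\alpha(x)>0$ (namely $G(0)=1$ because $P_{\Theta\mid X}\{\pi_{\Theta\mid X}=0\mid x\}=0$) where the paper cites an external proposition, and you flag the joint measurability of $\kappa_\alpha(x)$ and $\gamma(x)$, a point the paper does not address.
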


\begin{proof}
(Existence) Define
\begin{align*}
    \rho (\kappa \mid x) = P_{\Theta \mid X} \{ \pi_{\Theta \mid X}(\theta \mid x) > \kappa \mid x \}.
\end{align*}
By continuity of probability, it is easy to show that $\rho (\kappa \mid x)$ is a nonincreasing, right continuous function of $\kappa$ with left limit $\rho (\kappa - \mid x)= P_{\Theta \mid X} \{ \pi_{\Theta \mid X}(\theta \mid x) \geq \kappa \mid x \}$. Also, we have
\begin{align*}
    \rho (0 - \mid x)=1, \quad \rho(0 \mid x)=P_{\Theta \mid X} \{ \pi_{\Theta \mid X}(\theta \mid x) > 0 \mid x \}, \quad \lim_{\kappa \to \infty} \rho (\kappa \mid x) = 0.
\end{align*}
By proposition 3.1 of \cite{li2019graduate}, for any $0< \alpha < 1$, there exists a $0 < \kappa_\alpha (x) < \infty$ such that 
\begin{align*}
    \rho \{ \kappa_\alpha (x) \mid x \} \leq 1 - \alpha \leq \rho \{ \kappa_\alpha (x) - \mid x \}.
\end{align*}

If we define $\gamma (x)$ as 
\begin{align*}
    \gamma (x)= 
    \begin{cases}
    \cfrac{1 - \alpha - P_{\Theta \mid X} \{ \pi_{\Theta \mid X}(\theta \mid x) > \kappa_\alpha (x) \mid x \} }{P_{\Theta \mid X} \{ \pi_{\Theta \mid X}(\theta \mid x) = \kappa_\alpha (x) \mid x \} } & \text{if} \quad P_{\Theta \mid X} \{ \pi_{\Theta \mid X}(\theta \mid x) = \kappa_\alpha (x) \mid x \} > 0 \\
    0 & \text{otherwise}, 
    \end{cases}
\end{align*}
then $E_{\Theta \mid X} \{ \phi^*  (\Theta \mid x) \mid X = x \} = 1-\alpha$.
\\ 
\indent (Uniqueness)
Fix $0< \alpha <1$. Let $\phi^* $ be the smallest $(1-\alpha)$-level generalized credible set of the form \eqref{eq:GHPD} and $\phi^\prime $ be another the smallest $(1-\alpha)$-level generalized credible set. Since their credible levels are $1 - \alpha$, we have
\begin{align*}
    E_{\Theta \mid X} \{ \phi^*  (\Theta \mid x) \mid X = x \} = E_{\Theta \mid X} \{ \phi^\prime  (\Theta \mid x) \mid X = x \} = 1 - \alpha, 
\end{align*}
which implies
\begin{align}\label{eq:post prob}
    \int_{\Omega_\Theta} \{\phi^* (\theta \mid x) - \phi^\prime (\theta \mid x)\} \pi_{\Theta \mid X} ( \theta \mid x) d \mu_\Theta (\theta)=0.
\end{align}
Since they are the smallest credible sets, we also have 
\begin{align}\label{eq:smallest}
    \int_{\Omega_\Theta} \{\phi^* (\theta \mid x) - \phi^\prime (\theta \mid x)\} d \mu_\Theta (\theta)=0.
\end{align}
By \eqref{eq:post prob} and \eqref{eq:smallest}, we have
\begin{align*}
    & \int_{\Omega_\Theta} \{\phi^* (\theta \mid x) - \phi^\prime (\theta \mid x)\} \{\pi_{\Theta \mid X} ( \theta \mid x) - \kappa_\alpha (x)\} d \mu_\Theta (\theta)\\
    &=\int_{\Omega_\Theta} \{\phi^* (\theta \mid x) - \phi^\prime (\theta \mid x)\} \pi_{\Theta \mid X} ( \theta \mid x) d \mu_\Theta (\theta) - \kappa_\alpha (x) \int_{\Omega_\Theta} \{\phi^* (\theta \mid x) - \phi^\prime (\theta \mid x)\} d \mu_\Theta (\theta)\\
    &=0
\end{align*}
Since $\{\phi^* (\theta \mid x) - \phi^\prime (\theta \mid x)\} \{\pi_{\Theta \mid X} ( \theta \mid x) - \kappa_\alpha (x)\} \geq 0$, it is 0 a.e. with respect to $\mu_\Theta$. This implies 
\begin{align*}
    \mu_\Theta  \{ \phi^ * (\theta \mid x ) \neq \phi^\prime (\theta \mid x),\; \pi_{\Theta \mid X} ( \theta \mid x) \neq \kappa_\alpha (x) \mid x \}  = 0.
\end{align*}
By $P_{\Theta \mid X} \ll \mu_\Theta$, we have
\begin{align*}
    P_{\Theta \mid X} \{ \phi^* (\theta \mid x ) \neq \phi^\prime (\theta \mid x ),\; \pi_ {\Theta \mid X} ( \theta \mid x) \neq \kappa_\alpha (x) \mid x \}=0 \quad a.s. \; P_X, 
\end{align*}
which is the desired result.
\end{proof}

Let
$\rho (\kappa \mid x)$ be as defined in the proof of Theorem \ref{thm:exuniq}. The next proposition shows that the generalized highest posterior density credible set reduces to the highest posterior density credible set when $\rho(\cdot \mid x)$ is continuous at $\kappa_\alpha$.

\begin{proposition} \label{prop:highest posterior density}
Let $\kappa_\alpha (x) = \sup \{ \kappa : P \{ \pi_{\Theta \mid X} ( \theta \mid x ) \geq \kappa \mid x \} \geq 1-\alpha \}$. If $\rho$ is continuous at $\kappa_\alpha (x)$, then the generalized highest posterior density credible set $\phi^*$ in \eqref{eq:GHPD} reduces to 
\begin{equation*}\label{eq:highest posterior density}
\begin{aligned}
\phi^* (\theta \mid x) =
\begin{cases}
1 & \text{if} \quad \pi_{\Theta \mid X} ( \theta \mid x) \geq \kappa_\alpha (x)\\
0 & \text{if} \quad \pi_{\Theta \mid X} ( \theta \mid x) < \kappa_\alpha (x)
\end{cases}
\end{aligned}
\end{equation*}
so that $\{ \theta : \phi^* (\theta \mid x) = 1 \}$ is the $(1-\alpha)$-level highest posterior density credible set. 
\end{proposition}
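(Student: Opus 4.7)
The plan is to show that continuity of $\rho(\cdot \mid x)$ at $\kappa_\alpha(x)$ forces the ``transition'' level set $\{\theta : \pi_{\Theta\mid X}(\theta\mid x) = \kappa_\alpha(x)\}$ to be negligible, so the randomized middle case in \eqref{eq:GHPD} collapses and $\phi^*$ agrees with the indicator of $\{\theta : \pi_{\Theta\mid X}(\theta\mid x) \geq \kappa_\alpha(x)\}$ up to a null set. First, I would recall from the proof of Theorem~\ref{thm:exuniq} the identities
\begin{align*}
\rho(\kappa_\alpha(x) \mid x) &= P_{\Theta\mid X}\{\pi_{\Theta\mid X}(\theta\mid x) > \kappa_\alpha(x) \mid x\},\\
\rho(\kappa_\alpha(x)- \mid x) &= P_{\Theta\mid X}\{\pi_{\Theta\mid X}(\theta\mid x) \geq \kappa_\alpha(x) \mid x\},
\end{align*}
so continuity of $\rho$ at $\kappa_\alpha(x)$ immediately gives $P_{\Theta\mid X}\{\pi_{\Theta\mid X}(\theta\mid x) = \kappa_\alpha(x) \mid x\} = 0$.

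Next I would verify that the $\kappa_\alpha(x)$ defined in the proposition matches the one produced by the existence argument of Theorem~\ref{thm:exuniq}. By definition, $\sup\{\kappa : \rho(\kappa- \mid x) \geq 1-\alpha\}$ sandwiches the same value $\kappa_\alpha(x)$ between $\rho(\kappa_\alpha(x) \mid x) \leq 1-\alpha \leq \rho(\kappa_\alpha(x)- \mid x)$, and under continuity both bounds collapse to $1-\alpha$. Then I would argue that since $\kappa_\alpha(x) > 0$, the identity $P_{\Theta\mid X}\{\pi_{\Theta\mid X} = \kappa_\alpha(x)\mid x\} = \kappa_\alpha(x)\,\mu_\Theta\{\pi_{\Theta\mid X}(\cdot\mid x) = \kappa_\alpha(x)\}$ forces $\mu_\Theta\{\pi_{\Theta\mid X}(\cdot\mid x) = \kappa_\alpha(x)\} = 0$ as well.

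Finally, the definition of $\gamma(x)$ in Theorem~\ref{thm:exuniq} puts $\gamma(x) = 0$ in this case, so \eqref{eq:GHPD} reads $\phi^*(\theta\mid x) = 1$ on $\{\pi_{\Theta\mid X} > \kappa_\alpha(x)\}$ and $\phi^*(\theta\mid x) = 0$ elsewhere, while the indicator form in the proposition differs only on the $\mu_\Theta$-null boundary set $\{\pi_{\Theta\mid X} = \kappa_\alpha(x)\}$. Invoking the uniqueness clause of Theorem~\ref{thm:exuniq}, altering $\phi^*$ on this null set preserves both the credible level and the smallest-size property, so we may take $\phi^*$ to be the stated indicator. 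The main (minor) obstacle is just being careful that ``continuity of $\rho$'' genuinely eliminates both the $P_{\Theta\mid X}$-mass and the $\mu_\Theta$-measure of the boundary; the bridge $P_{\Theta\mid X}(A\mid x) = \int_A \pi_{\Theta\mid X}(\theta\mid x)\,d\mu_\Theta(\theta)$ together with $\kappa_\alpha(x) > 0$ handles this cleanly.
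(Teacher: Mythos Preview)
Your proposal is correct and follows essentially the same line as the paper: both arguments establish the sandwich $\rho\{\kappa_\alpha(x)\mid x\}\le 1-\alpha\le\rho\{\kappa_\alpha(x)-\mid x\}$ for the specific $\kappa_\alpha(x)$ in the statement, then use continuity of $\rho$ to conclude $P_{\Theta\mid X}\{\pi_{\Theta\mid X}=\kappa_\alpha(x)\mid x\}=0$. The only difference is cosmetic: the paper simply takes $\gamma(\theta\mid x)=1$ so that $\phi^*$ equals the indicator $\one\{\pi_{\Theta\mid X}\ge\kappa_\alpha(x)\}$ exactly, whereas you take $\gamma=0$ and then pass through a $\mu_\Theta$-null-set argument and the uniqueness clause---your route works, but the paper's shortcut avoids that extra bookkeeping.
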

\begin{proof}
Let $S = \{ \kappa : P \{ \pi_{\Theta \mid X} ( \theta \mid x ) \geq \kappa \mid x \} \geq 1-\alpha \}$. Then $ \kappa_\alpha (x) = \sup S$. Recall that $\rho (\kappa \mid x) = P_{\Theta \mid X} \{ \pi_{\Theta \mid X}(\theta \mid x) > \kappa \mid x \}$. 
If $\kappa > \kappa_\alpha (x)$, then $\kappa \notin S$, so that $\rho (\kappa \mid x) \leq P_{\Theta \mid X} \{ \pi_{\Theta \mid X}(\theta \mid x) \geq \kappa \mid x \} < 1-\alpha$. By the right continuity of $\rho$, we have 
\begin{align}\label{eq:leq1-a}
    \rho \{ \kappa_\alpha (x) \mid x \} \leq 1-\alpha. 
\end{align}

Next, we show that the function $\kappa \mapsto \rho(\kappa- \mid x)$ is left continuous at $\kappa_\alpha (x)$ or equivalently, for any $\epsilon > 0$, there exists a $\delta > 0 $ such that for every $\kappa \in ( \kappa_\alpha (x)-\delta, \kappa_\alpha (x) )$, we have $\rho ( \kappa- \mid x ) - \rho \{ \kappa_\alpha (x)- \mid x \} < \epsilon$. By the definition of left limit, for any $\epsilon > 0 $, there exist a $\delta > 0$ such that $\kappa \in (\kappa_\alpha (x) - \delta, \kappa_\alpha (x))$ implies $\rho (\kappa \mid x) - \rho \{ \kappa_\alpha (x) - \mid x \} < \epsilon$. Now fix a $\kappa \in (\kappa_\alpha (x) - \delta, \kappa_\alpha (x))$ and let $\kappa^\prime \in (\kappa_\alpha (x) - \delta, \kappa )$. Then we have 
\begin{align*}
    \rho(\kappa^\prime \mid x) \geq \rho(\kappa- \mid x).
\end{align*}
Since $\kappa^\prime \in (\kappa_\alpha (x) - \delta, \kappa_\alpha (x) )$, we have $\rho(\kappa^\prime \mid x) - \rho \{ \kappa_\alpha (x)- \mid x \} < \epsilon
$ so that $\rho(\kappa - \mid x) - \rho \{ \kappa_\alpha (x)- \mid x \} < \epsilon$. Hence, $\rho(\kappa- \mid x)$ is left continuous at $\kappa_\alpha (x)$, and consequently, 
\begin{align}\label{eq:geq1-a}
    \rho \{ \kappa_\alpha (x)- \mid x \} \geq 1-\alpha.
\end{align}

Therefore, by \eqref{eq:leq1-a}, \eqref{eq:geq1-a}, and the continuity of $\rho$ at $\kappa_\alpha (x)$,  we obtain
\begin{align*}
    \rho \{ \kappa_\alpha (x) \mid x \} = \rho \{ \kappa_\alpha (x) - \mid x \} = 1-\alpha,
\end{align*}
which implies $P[ \{ \theta : \pi ( \theta \mid x) = \kappa_\alpha (x) \} \mid x ] = 0$.
Hence, we can take $\gamma(\theta \mid x) = 1$ in \eqref{eq:GHPD}, in which case
\begin{align*}
    E \{ \phi^* (\Theta \mid x) \mid x \} = P \{ \pi_{\Theta \mid X} ( \theta \mid x) \geq \kappa_\alpha (x) \mid x \}= \rho \{ \kappa_\alpha (x) - \mid x \} =1 - \alpha.
\end{align*}
Moreover,
\begin{align*}
    \{ \theta : \phi^* (\theta \mid x) =1 \} 
    = \{ \theta : \pi_{\Theta \mid X} ( \theta \mid x) \geq \kappa_\alpha (x)\}, 
\end{align*}
which is the highest posterior density credible set $C(\kappa_\alpha)$.    
\end{proof}

Figure~\ref{fig:normalnormalGHPD} shows that when $\theta$ is a continuous random variable and $\rho$ is continuous at $\kappa_\alpha (x)$, the $(1-\alpha)$-level generalized highest posterior density credible set $\phi^* (\theta \mid x )$ only takes values in $\{ 0, 1\}$, as claimed in Proposition \ref{prop:highest posterior density}. We can see that $\{\theta : \phi^* (\theta \mid x ) = 1 \}$ in Fig.~\ref{fig:normalGHPD} corresponds to the 95\% highest posterior density credible interval of $(-1.96, -1.96)$, where  $\Theta \mid X = x $ has $ \mathrm N(0, 1)$.
\begin{figure}
\centering
\subfloat[]{%
  \includegraphics[width=0.45\textwidth]{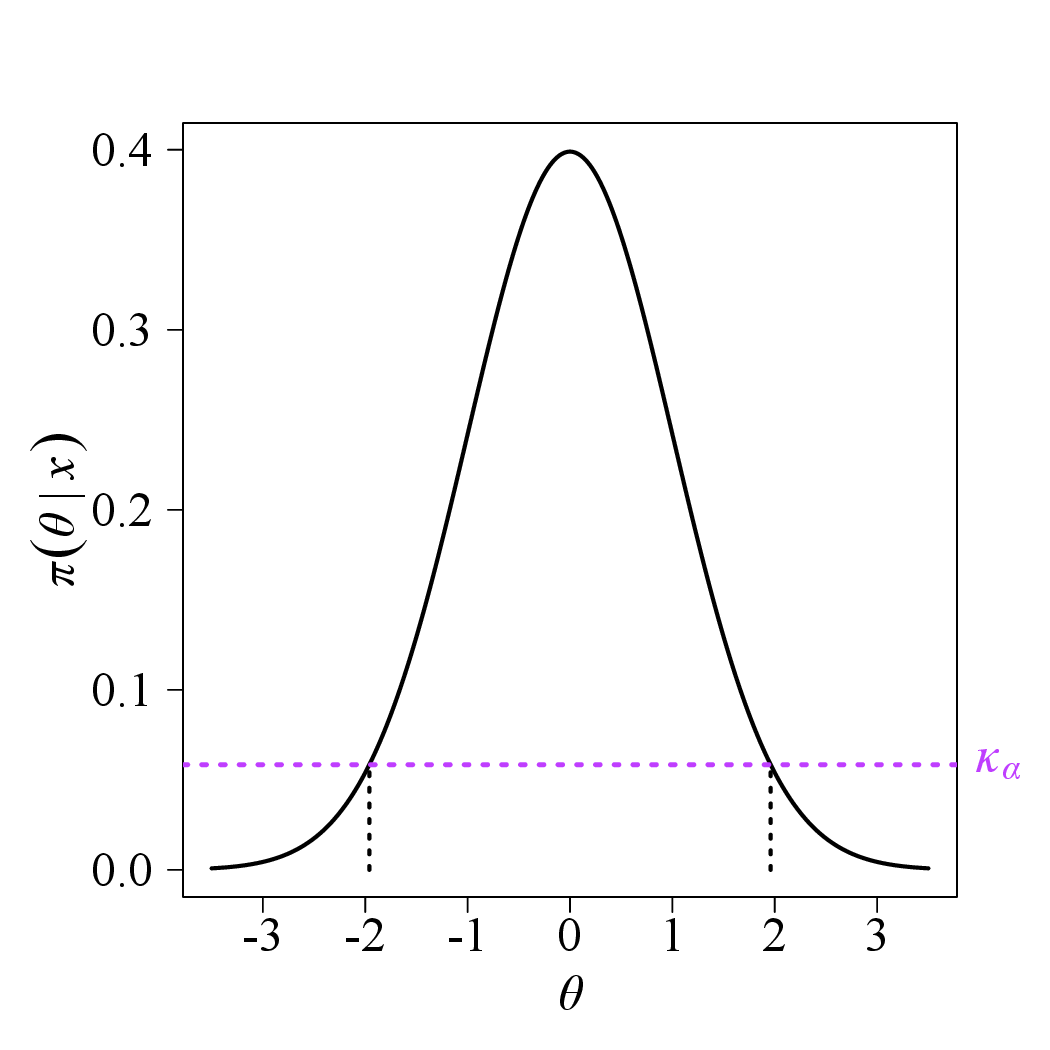}\label{fig:normal}%
}%
\subfloat[]{%
  \includegraphics[width=0.45\textwidth]{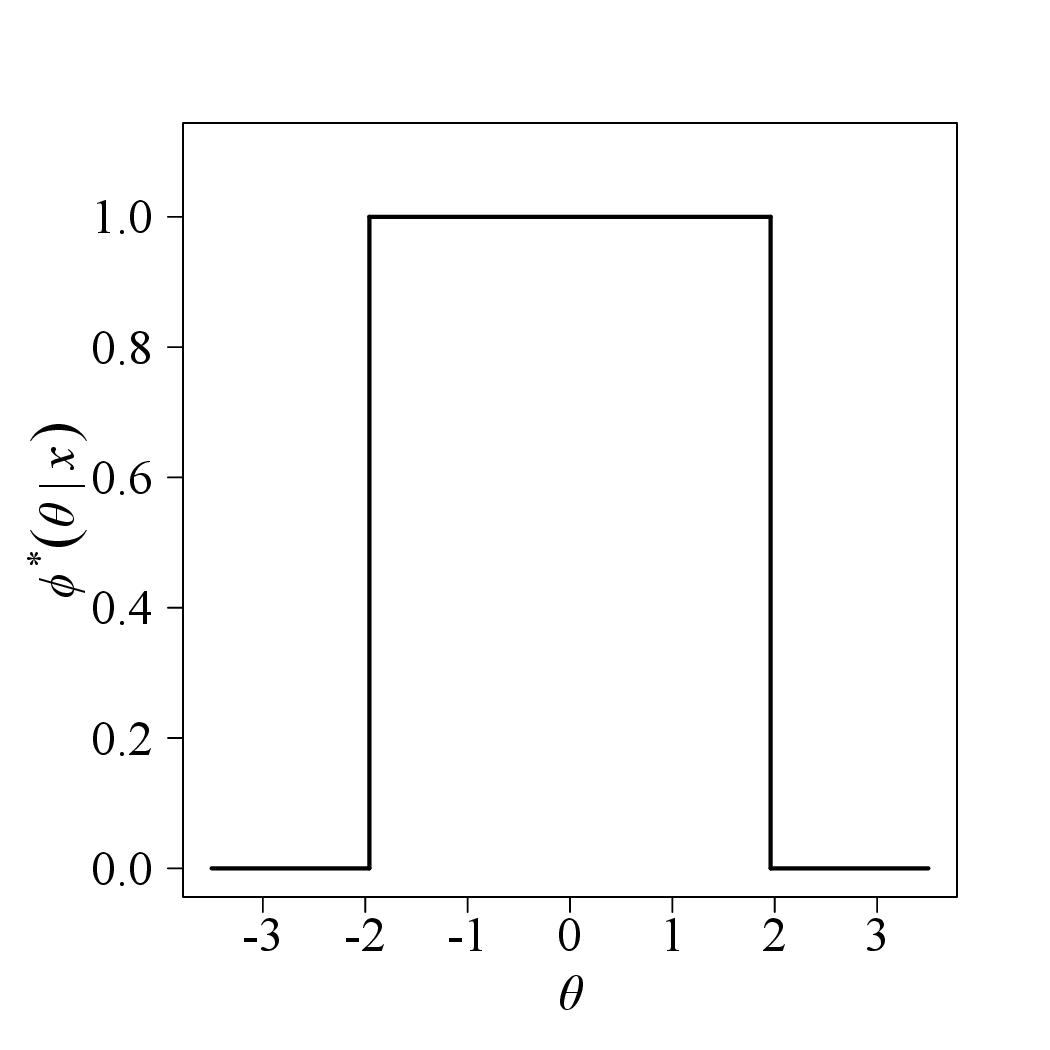}\label{fig:normalGHPD}%
}
\caption{(a) The 95\% highest posterior density credible set $(-1.96, 1.96)$ for the standard normal posterior density; (b) The 95\% generalized highest posterior density credible set for the standard normal posterior density.}
\label{fig:normalnormalGHPD}
\end{figure}

\section{Fair generalized highest posterior density credible set}

The generalized highest posterior density credible set $\phi^*$ in \eqref{eq:GHPD} is not unique, unless $P_{\Theta \mid X} \{ \pi_{\Theta \mid X}(\theta \mid x ) = \kappa_\alpha (x) \mid x \}=0$. When $P_{\Theta \mid X} \{ \pi_{\Theta \mid X}(\theta \mid x) = \kappa_\alpha (x) \mid x\} > 0$, any $\gamma (\theta \mid x)$ satisfying 
\begin{align}\label{eq:E}
    \int_{K(x)} \gamma (\theta \mid x) \pi_{\Theta \mid X} ( \theta \mid x) d \mu_\Theta (\theta) = 1 - \alpha - P_{\Theta \mid X} \{ \pi_{\Theta \mid X}(\theta \mid x) > \kappa_\alpha (x) \mid x \}
\end{align}
is optimal in the sense of Theorem \ref{thm:smallest} where $K(x) = \{ \theta: \pi_{\Theta \mid X} ( \theta \mid x) = \kappa_\alpha (x) \}$. In other words,  if we let 
\begin{align}\label{eq:GHPDcollection}
    S=\{ \phi^* : \text{ $\phi^*$ is of the form \eqref{eq:GHPD} satisfying \eqref{eq:E}} \},
\end{align}
then every member of $S$ is the generalized highest posterior density credible set. For definiteness, we introduce the fair generalized highest posterior density credible set as follows. 
\begin{definition}\label{def:fairGHPD}
Let $S$ be the collection of all generalized highest posterior density credible sets of level-$(1-\alpha)$ as defined in \eqref{eq:GHPDcollection}.
We call $\phi^*_f \in S$ the fair generalized highest posterior density credible set if 
\begin{align*}
    \var \{ \phi^*_f (\theta \mid x) \} \leq 
    \var \{ \phi^* (\theta \mid x) \}  \quad \text{for all $\phi^* \left(\theta \mid x\right) \in S$}. 
\end{align*}
\end{definition}

To understand this definition, it is helpful to interpret the value of $\phi^* (\theta \mid x)$ at $\theta$ as the probability of $\theta$ belonging to the credible set conditioning on $X = x$. That is, we toss a coin with head probability $\phi^* (\theta \mid x)$ to determine whether $\theta$ belongs to the credible set. This amounts to introducing a Bernoulli random variable $U$ at $\theta$ with a success probability of  $\phi^* (\theta \mid x)$. If $U=1$, then we decide $\theta$ belongs to the credible set, otherwise, it doesn't. So, when there is more than one generalized highest posterior density credible set, it is desirable to make $\phi^* (\theta \mid x)$ as similar as possible to give each $\theta$ a fair treatment. That is, we should minimize the variation of $\phi^* (\theta \mid x)$ with respect to $\pi_{\Theta \mid X} (\theta \mid x)$.

The above definition is needed only when $P_{\Theta \mid X} \{ \pi_{\Theta \mid X} ( \theta \mid x) = \kappa_\alpha (x) \} > 0$. When this probability is 0, it doesn't matter how we define $\gamma (\theta \mid x)$. For example, we can take $\gamma (\theta \mid x)=0$ on the set $\{ \theta: \pi_{\Theta \mid X} ( \theta \mid x) = \kappa_\alpha (x) \}$.
The following theorem characterizes the form of the fair generalized highest posterior density credible set.
\begin{figure}[]
\centering
\subfloat[]{%
  \includegraphics[width=0.45\textwidth]{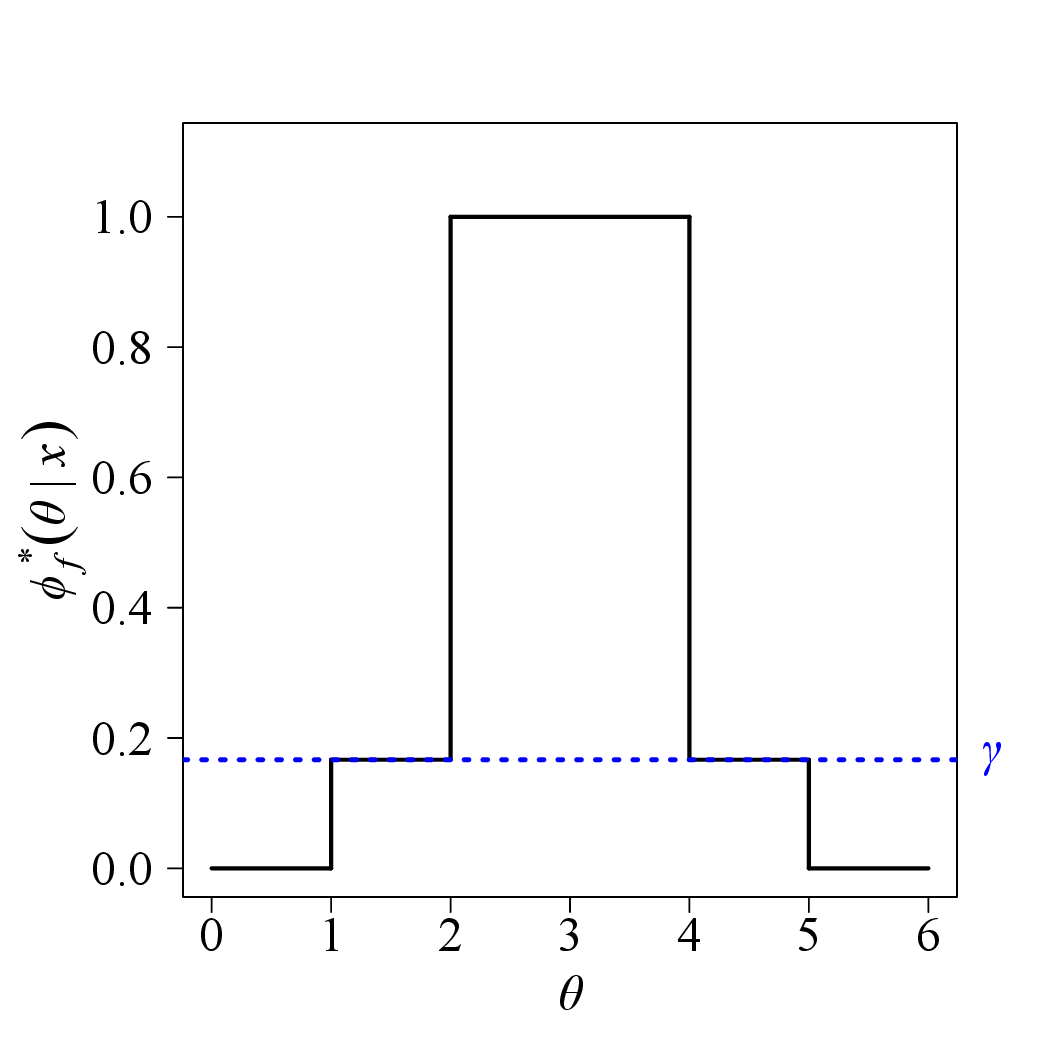}\label{fig:flatGHPD}%
}%
\subfloat[]{%
  \includegraphics[width=0.45\textwidth]{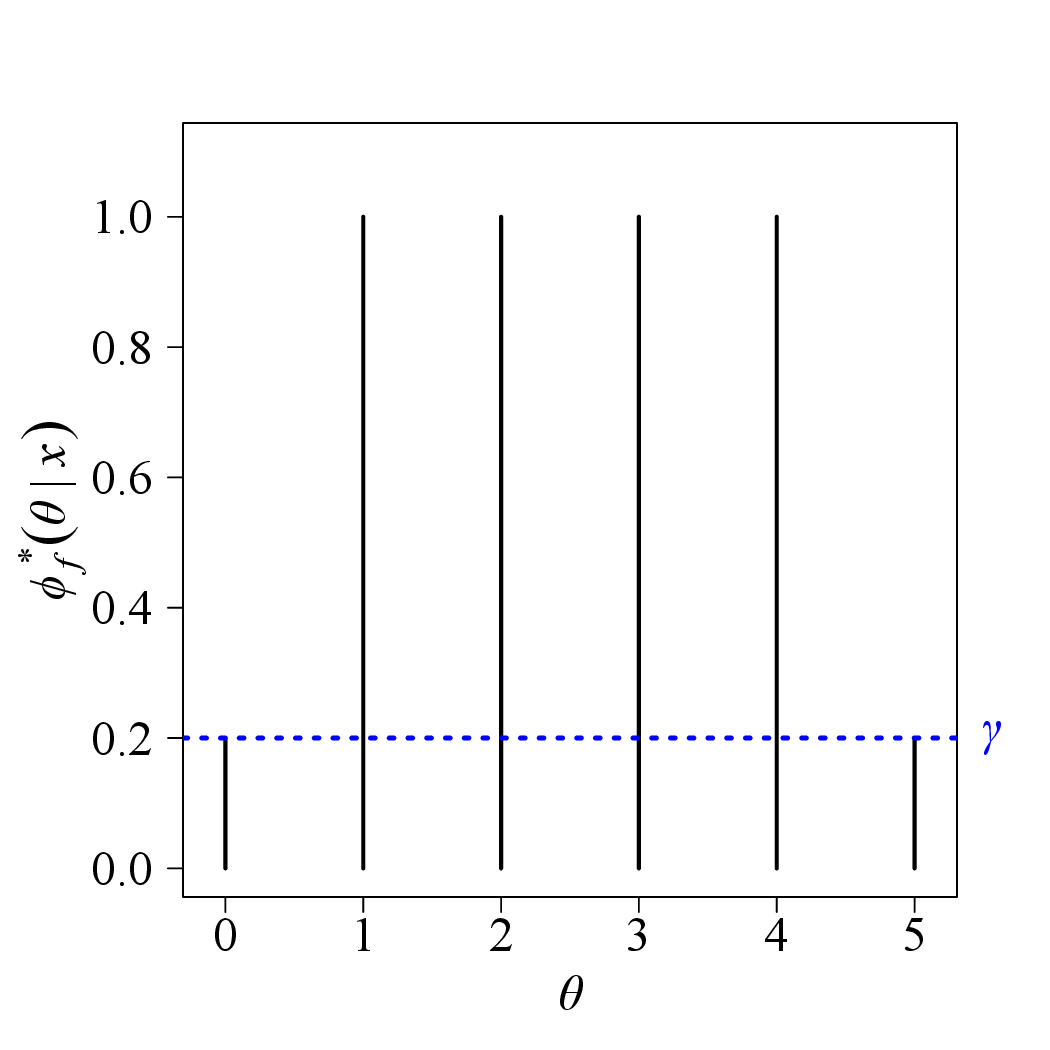}\label{fig:discGHPD}%
}
\caption{(a) The $(1-\alpha)$-level fair generalized highest posterior density credible set of Fig.~\ref{fig:flatdistn}; (b) The $(1-\alpha)$-level fair generalized highest posterior density credible set of Fig.~\ref{fig:discdistn}. 
}
\label{fig:flatdiscGHPD}
\end{figure}

\begin{theorem} \label{thm:gamma}
Suppose $P_{\Theta \mid X} \{ \pi_{\Theta \mid X} ( \theta \mid x) = \kappa_\alpha (x) \} > 0$. Then the fair generalized highest posterior density credible set $\phi^*_f$ is the member of $S$ with 
\begin{align*}
    \gamma (\theta \mid x)= 
    \cfrac{1 - \alpha - P_{\Theta \mid X} \{ \pi_{\Theta \mid X}(\theta \mid x) > \kappa_\alpha (x) \mid x \} }{P_{\Theta \mid X} \{ \pi_{\Theta \mid X}(\theta \mid x) = \kappa_\alpha (x) \mid x \} }.
\end{align*}
\end{theorem}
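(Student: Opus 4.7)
The plan is to reduce the fairness condition to a constrained optimization problem on the transition set $K(x)=\{\theta:\pi_{\Theta\mid X}(\theta\mid x)=\kappa_\alpha(x)\}$ and then apply Cauchy--Schwarz. The first step is to observe that because every $\phi^*\in S$ has conditional mean $E\{\phi^*(\Theta\mid X)\mid X=x\}=1-\alpha$, minimizing $\var\{\phi^*(\theta\mid x)\}$ over $S$ is equivalent to minimizing the conditional second moment $E\{\phi^{*2}(\Theta\mid X)\mid X=x\}$. By the three-branch form \eqref{eq:GHPD}, $\phi^{*2}$ is $1$ on $\{\pi_{\Theta\mid X}(\theta\mid x)>\kappa_\alpha(x)\}$, $\gamma^2(\theta\mid x)$ on $K(x)$, and $0$ on $\{\pi_{\Theta\mid X}(\theta\mid x)<\kappa_\alpha(x)\}$, so after splitting the integral the only piece that depends on the choice within $S$ is
\begin{align*}
\int_{K(x)}\gamma^2(\theta\mid x)\,\pi_{\Theta\mid X}(\theta\mid x)\,d\mu_\Theta(\theta)
=\kappa_\alpha(x)\int_{K(x)}\gamma^2(\theta\mid x)\,d\mu_\Theta(\theta),
\end{align*}
since $\pi_{\Theta\mid X}(\theta\mid x)=\kappa_\alpha(x)$ on $K(x)$.

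Next, I would rewrite the defining constraint \eqref{eq:E} on the same footing: $\pi_{\Theta\mid X}(\theta\mid x)=\kappa_\alpha(x)$ on $K(x)$ turns \eqref{eq:E} into
\begin{align*}
\kappa_\alpha(x)\int_{K(x)}\gamma(\theta\mid x)\,d\mu_\Theta(\theta)=1-\alpha-P_{\Theta\mid X}\{\pi_{\Theta\mid X}(\theta\mid x)>\kappa_\alpha(x)\mid x\},
\end{align*}
so $\int_{K(x)}\gamma(\theta\mid x)\,d\mu_\Theta(\theta)$ is a constant $c(x)$ that is fixed across all members of $S$. The problem is therefore: minimize $\int_{K(x)}\gamma^2\,d\mu_\Theta$ subject to $\int_{K(x)}\gamma\,d\mu_\Theta=c(x)$ and $0\le\gamma\le 1$.

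The main (essentially only) inequality is Cauchy--Schwarz on $L_2(K(x),\mu_\Theta)$:
\begin{align*}
\Bigl(\int_{K(x)}\gamma(\theta\mid x)\,d\mu_\Theta(\theta)\Bigr)^2
\le \mu_\Theta(K(x))\int_{K(x)}\gamma^2(\theta\mid x)\,d\mu_\Theta(\theta),
\end{align*}
which gives $\int_{K(x)}\gamma^2\,d\mu_\Theta\ge c(x)^2/\mu_\Theta(K(x))$, with equality if and only if $\gamma(\theta\mid x)$ is $\mu_\Theta$-almost surely constant on $K(x)$. Solving the constraint for that constant gives $\gamma(\theta\mid x)=c(x)/\mu_\Theta(K(x))$, which I then rewrite using $P_{\Theta\mid X}\{\pi_{\Theta\mid X}(\theta\mid x)=\kappa_\alpha(x)\mid x\}=\kappa_\alpha(x)\mu_\Theta(K(x))$ to match the formula in the theorem. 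Feasibility ($0\le\gamma\le 1$) is automatic because Theorem \ref{thm:exuniq} already produced this exact constant as a valid member of $S$.

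The only real obstacle is bookkeeping: making sure the variance in Definition \ref{def:fairGHPD} is interpreted with respect to the posterior $P_{\Theta\mid X}(\cdot\mid x)$ (as the discussion following the definition makes clear), and carefully converting posterior-probability integrals over $K(x)$ into $\mu_\Theta$-integrals via the fact that $\pi_{\Theta\mid X}$ is identically $\kappa_\alpha(x)$ there. Once that conversion is in hand, Cauchy--Schwarz finishes the proof immediately and simultaneously establishes uniqueness of the fair generalized highest posterior density credible set up to $\mu_\Theta$-null sets on $K(x)$.
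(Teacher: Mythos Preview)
Your proposal is correct and follows essentially the same route as the paper: reduce to minimizing $\int_{K(x)}\gamma^2\,\pi_{\Theta\mid X}\,d\mu_\Theta$ under the linear constraint \eqref{eq:E}, then apply a convexity inequality to pin down the constant $\gamma$. The only cosmetic difference is that you factor out $\kappa_\alpha(x)$ on $K(x)$ and invoke Cauchy--Schwarz against $\mathbbm{1}_{K(x)}$, whereas the paper keeps $\pi_{\Theta\mid X}$ in the integrand and phrases the same step as Jensen's inequality for the probability density $\pi_{\Theta\mid X}/P_{\Theta\mid X}\{\pi_{\Theta\mid X}=\kappa_\alpha(x)\mid x\}$ on $K(x)$; these are the same inequality.
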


\begin{proof}
Let $K(x) = \{ \theta: \pi_{\Theta \mid X} ( \theta \mid x) = \kappa_\alpha (x) \}$.
Since $\phi^* \in S$ satisfies $E \{ \phi^*  (\Theta \mid X) \mid X \} = 1 - \alpha$, we have
\begin{align}\label{eq:constant}
    \int_{K(x)} \gamma (\theta \mid x) \pi_{\Theta \mid X} ( \theta \mid x) d \mu_\Theta (\theta) = 1 - \alpha - P_{\Theta \mid X} \{ \pi_{\Theta \mid X}(\theta \mid x) > \kappa_\alpha (x) \mid x \}.
\end{align}
By Definition \ref{def:fairGHPD}, the fair generalized highest posterior density credible set is 
\begin{align*}
    \phi^*_f = \argmin_{\phi^* \in S} [ \var \{ \phi^* (\Theta \mid X) \mid X \} ].
\end{align*}
Notice that
\begin{align*}
    \var \{ \phi^* (\Theta \mid X) \mid X \}
    & = E\{ \phi^* (\Theta \mid X)^2 \mid X \} - [ \{E \{ \phi^* (\Theta \mid X) \mid X\} ]^ 2\\
    & = P_{\Theta \mid X} \{ \pi_{\Theta \mid X}(\theta \mid x) > \kappa_\alpha (x) \mid x \} \\
    & \quad + \int_{K(x)} \gamma^2 (\theta \mid x) \pi_{\Theta \mid X} ( \theta \mid x) d \mu_\Theta (\theta) - (1-\alpha)^2 .
\end{align*}
Hence, it suffices to minimize
\begin{align*}
    \int_{K(x)} \gamma^2 (\theta \mid x) \pi_{\Theta \mid X} ( \theta \mid x) d \mu_\Theta (\theta) 
\end{align*}
subject to \eqref{eq:constant}. Since 
\begin{align*}
    \frac{\pi_{\Theta \mid X} ( \theta \mid x)}{P_{\Theta \mid X} \{ \pi_{\Theta \mid X} ( \theta \mid x) = \kappa_\alpha (x) \mid x \}}
\end{align*}
is a probability density on $K(x)$, we have, by Jensen's inequality, 
\begin{align*}
    & \int_{K(x)} \gamma^2 (\theta \mid x) \frac{\pi_{\Theta \mid X} ( \theta \mid x)}{P_{\Theta \mid X} \{ \pi_{\Theta \mid X} ( \theta \mid x) = \kappa_\alpha (x) \mid x \}} d \mu_\Theta (\theta) \\
    &\geq
    \left( \int_{K(x)} \gamma (\theta \mid x) \frac{\pi_{\Theta \mid X} ( \theta \mid x)}{P_{\Theta \mid X} \{ \pi_{\Theta \mid X} ( \theta \mid x) = \kappa_\alpha (x) \mid x \}} d \mu_\Theta (\theta) \right)^2.
\end{align*}
Canceling $P_{\Theta \mid X} \{ \pi_{\Theta \mid X} ( \theta \mid x) = \kappa_\alpha (x) \mid x \}$ on both sides above and evoking \eqref{eq:constant}, we have
\begin{align*}
    &\int_{K(x)} \gamma^2 (\theta \mid x) \pi_{\Theta \mid X} ( \theta \mid x) d \mu_\Theta (\theta)\\
    & \geq
    \frac{1}{P_{\Theta \mid X} \{ \pi_{\Theta \mid X} ( \theta \mid x) = \kappa_\alpha (x) \mid x \}}\left( \int_{K(x)} \gamma (\theta \mid x) \pi_{\Theta \mid X} ( \theta \mid x) d \mu_\Theta (\theta) \right)^2\\
    & = \frac{[1 - \alpha - P_{\Theta \mid X} \{ \pi_{\Theta \mid X} ( \theta \mid x) > \kappa_\alpha (x) \mid x \} ]^2}{P_{\Theta \mid X} \{ \pi_{\Theta \mid X} ( \theta \mid x) = \kappa_\alpha (x) \mid x \}}
\end{align*}
for any $\gamma (\theta \mid x)$. On the other hand, if we take 
\begin{align*}
    \gamma (\theta \mid x) = \frac{1 - \alpha - P_{\Theta \mid X} \{ \pi_{\Theta \mid X} ( \theta \mid x) > \kappa_\alpha (x) \mid x \} }{P_{\Theta \mid X} \{ \pi_{\Theta \mid X} ( \theta \mid x) = \kappa_\alpha (x) \mid x \}},
\end{align*}
then 
\begin{align*}
    &\int_{K(x)} \gamma^2 (\theta \mid x) \pi_{\Theta \mid X} ( \theta \mid x) d \mu_\Theta (\theta)\\
    & =
    \left[\frac{1 - \alpha - P_{\Theta \mid X} \{ \pi_{\Theta \mid X} ( \theta \mid x) > \kappa_\alpha (x) \mid x \}}{P_{\Theta \mid X} \{ \pi_{\Theta \mid X} ( \theta \mid x) = \kappa_\alpha (x) \mid x \}} \right]^2 \int_{K(x)} \pi_{\Theta \mid X} ( \theta \mid x) d \mu_\Theta (\theta)\\
    & =
    \left[\frac{1 - \alpha - P_{\Theta \mid X} \{ \pi_{\Theta \mid X} ( \theta \mid x) > \kappa_\alpha (x) \mid x \}}{P_{\Theta \mid X} \{ \pi_{\Theta \mid X} ( \theta \mid x) = \kappa_\alpha (x) \mid x \}} \right]^2 P_{\Theta \mid X} \{ \pi_{\Theta \mid X} ( \theta \mid x) = \kappa_\alpha (x) \mid x \}\\
    & = \frac{[1 - \alpha - P_{\Theta \mid X} \{ \pi_{\Theta \mid X} ( \theta \mid x) > \kappa_\alpha (x) \mid x \} ]^2}{P_{\Theta \mid X} \{ \pi_{\Theta \mid X} ( \theta \mid x) = \kappa_\alpha (x) \mid x \}}, 
\end{align*}
which achieves the lower bound.
\end{proof}

To provide more intuition about our construction, let us revisit the scenario in Fig.~\ref{fig:highest posterior densityfail} where $(1-\alpha)$-level highest posterior density credible set fails to exist and see how the fair generalized highest posterior density credible set fixes the problem. Figure \ref{fig:flatdiscGHPD} illustrates that the $(1-\alpha)$-level fair generalized highest posterior density credible set $\phi^*_f$ achieves the exact $(1-\alpha)$-level credibility by having the compensating constant $\gamma (\theta \mid x)$. The value of $\gamma (\theta \mid x)$ is 0.167 for Fig.~\ref{fig:flatGHPD} and 0.2 for Fig.~\ref{fig:discGHPD}. Keep in mind that $\phi^*_f$ is no longer an indicator function of a set, and when $\phi^*_f$ takes value in $(0,1)$, it represents the probability that $\theta$ belongs to a particular class given the data $X$.

\section{The Steering Wheel Plot}

We now develop a graphical representation of the fair generalized highest posterior density credible set in the classification setting, which is called the steering wheel plot, which helps to visualize the uncertainty in classification.

The steering wheel plot consists of an inner hub, spokes, and an outer rim as shown in Fig.~\ref{fig:swp}. The inner hub represents the estimated class in color. The spokes represent the $\phi_f^* (\theta \mid x )$ values. The color of a spoke symbolizes the class it represents. Corresponding to the value of $\phi_f^* (\theta \mid x )$, the length of a spoke can be $0$, $\gamma(\theta \mid x)$, or $1$. In this figure,  $\gamma(\theta \mid x)$ is abbreviated as $\gamma$. The set $\{\theta: \phi_f^*  (\theta \mid x ) = 1 \}$ is called the interior of the fair generalized highest posterior density credible set, the set $\{\theta: \phi_f^*  (\theta \mid x ) = 0\}$ the exterior, and the set $\{\theta: \phi_f^*  (\theta \mid x ) = \gamma \}$ the boundary.

\begin{figure}
\centering
    \centering
    \includegraphics[width=0.45\textwidth]{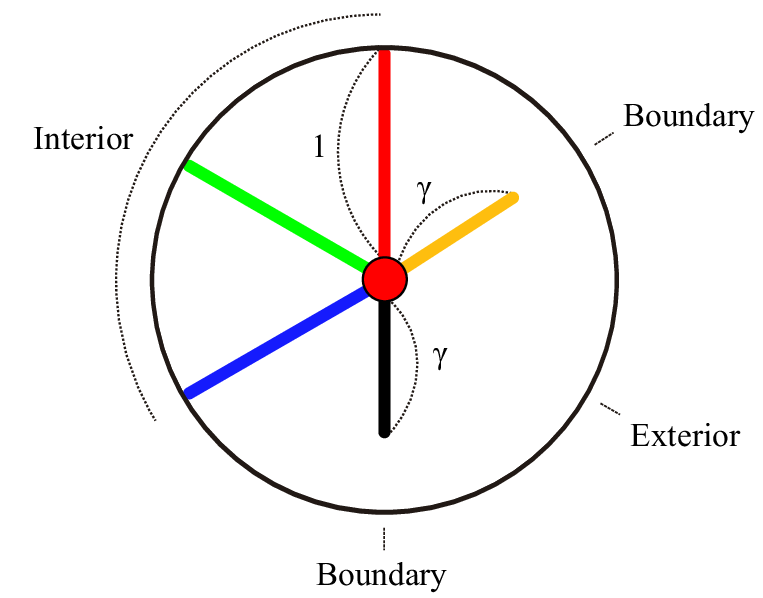}
    \caption{Steering wheel plot structure}
    \label{fig:swp}
\end{figure}

\section{Simulation and data application}
\subsection{Simulation}
In this section, we demonstrate how the steering wheel plot can assist in the visualization of uncertainty in classification problems. Suppose $X$ is a 2-dimensional random vector having a bivariate normal distribution, and for each class $\theta \in \{ \text{red, green, blue}\}$, $X$ has a mean dependent on $\theta$, but a variance matrix independent of $\theta$. We generate $X$ from
\begin{align*}
    X_{\theta i} \sim N _2( \mu_\theta,  \Sigma), 
    \quad ( \theta=\text{red, green, blue};\ i = 1, \ldots, 10),
\end{align*}
where $ \mu_\text{red} = (5, 6)^\T$, $ \mu_\text{green} = (4, 5)^\T$, $ \mu_\text{blue} = (6, 4)^\T$ and $ \Sigma =  I_2$. We apply quadratic discriminant analysis to estimate posterior density $\pi_{\Theta \mid X} (\theta \mid x)$ using the generated data and then classify the data points into the class which has the largest posterior probability. We then obtain a 95\%-level fair generalized highest posterior density credible set $\phi^*_f$ and present the steering wheel plot for each classified object. 

In Fig.~\ref{fig:sim1}, we see that a steering wheel plot has a single spoke when the classification is certain but more spokes when the classification becomes less certain. For instance, let us compare the red data points A and B in Fig.~\ref{fig:sim1original}. Point A appears to belong to the red class with certainty, and thus, it has only one red spoke with a length of 0.967 in Fig.~\ref{fig:sim1steer}. This suggests that if the red class is included in the fair generalized highest posterior credible set with a probability of 0.967, then the credible set attains the 95\% credible level. On the other hand, point B seems to be more difficult to classify, and it is even classified as blue with three spokes in Figure~\ref{fig:sim1steer}. The spoke lengths are 1 for red and blue, and 0.49 for green. The 95\% fair generalized credible set for that point contains red and blue classes for sure but contains the green class with a probability of 0.49.

\begin{figure}[]
\centering
\subfloat[]{%
  \includegraphics[width=0.45\textwidth]{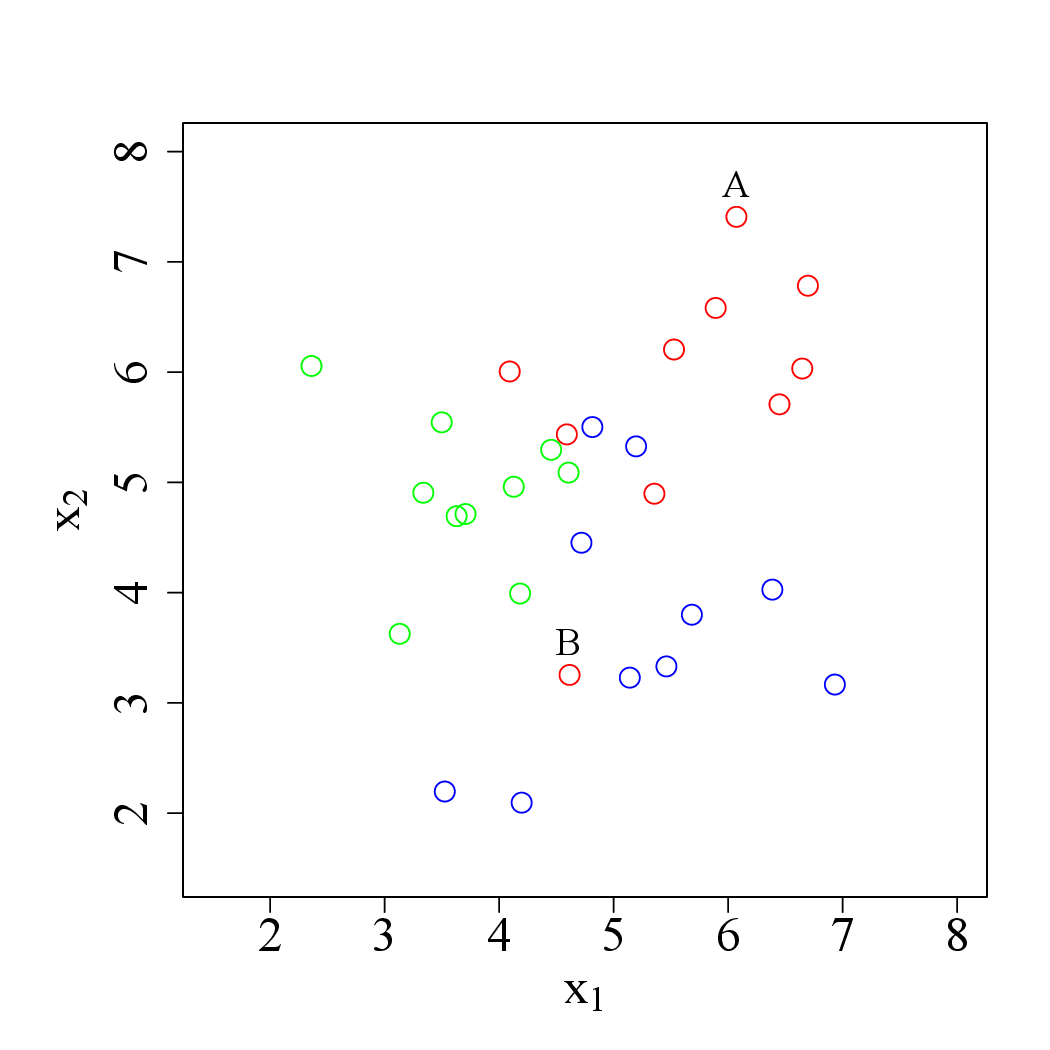}\label{fig:sim1original}%
}%
\subfloat[]{%
  \includegraphics[width=0.45\textwidth]{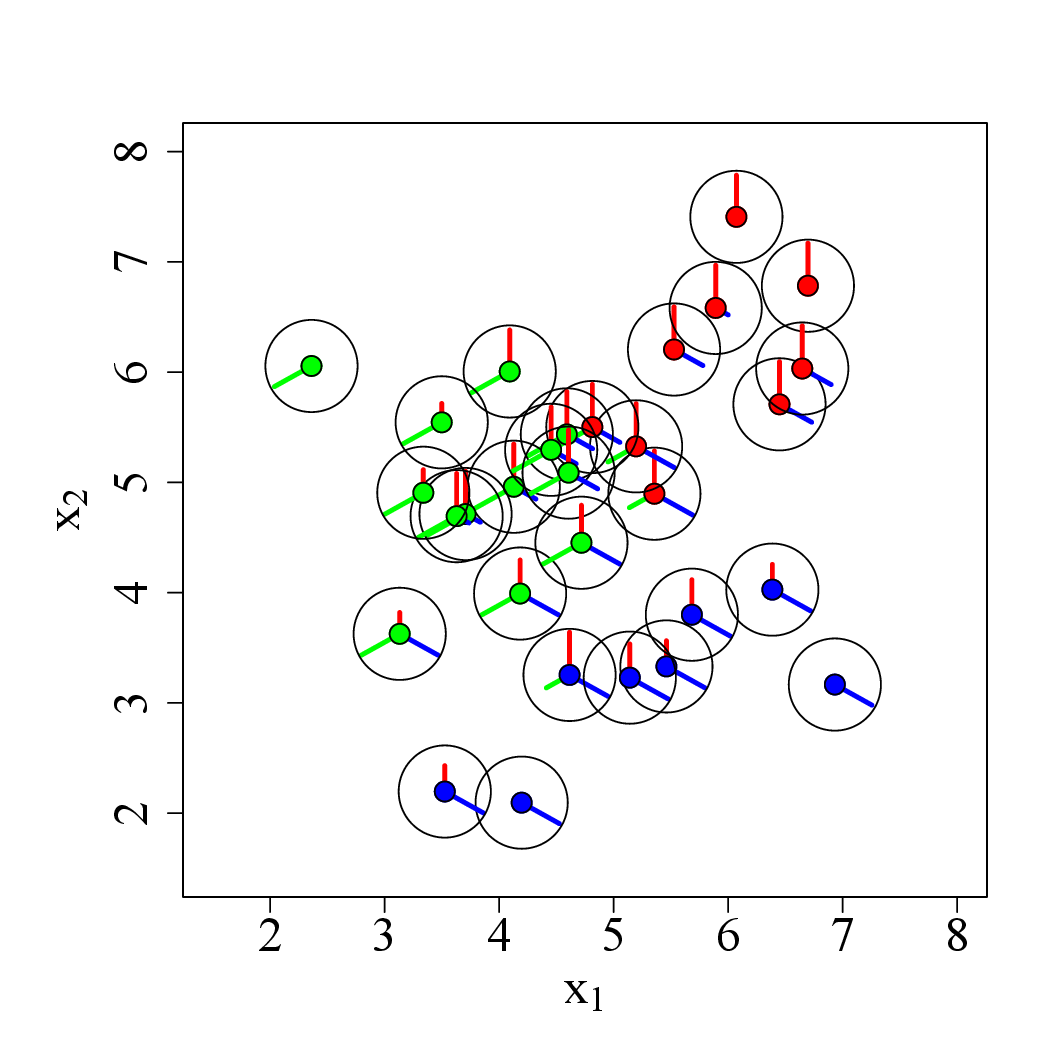}\label{fig:sim1steer}%
}
\caption{Three classes are represented by red, green, or blue. (a) Generated data with true labels; (b) Classification results with steering wheel plots.}
\label{fig:sim1}
\end{figure}

\subsection{Data application}
We now apply the fair generalized highest posterior density credible set and the steering wheel plot to speaker accent recognition data. It is available at the UCI Machine Learning Repository at \url{https://archive.ics.uci.edu/ml/datasets/Speaker+Accent+Recognition#}; see also \citet{ma2015comparison}. There are 12 integer features and six different accents of English: Spanish, French, German, Italian, British, and American. The data has a sample size of 329. Speakers from six different countries read single English words. Twelve features for each subject are extracted from the soundtrack of no more than one second of reading a word using Mel-Frequency Cepstral Coefficients. 

For better visual effect, we first perform sufficient dimension reduction to reduce the dimension of the predictors \citep{li2018sufficient}. 
We apply sliced inverse regression \citep{li1991sliced} and directional regression \citep{li2007directional} to find low-dimensional predictors with four dimensions. We then conduct quadratic discriminant analysis to classify the accents in the reduced space. The 95\%-level fair generalized highest posterior density credible set and steering wheel plot enable us to infer the classification uncertainty.

Figure \ref{fig:accent} shows the distinction between the classification uncertainties of two different quadratic discriminant analysis classifiers. For example, considering the French accent, the spokes of the classifier with sliced inverse regression are French, American, Spanish, or British, whereas the spokes of the classifier with directional regression are French or American. We can infer that directional regression helps to reduce classification uncertainty when classifying French accents.
\begin{figure}[]
\centering
\subfloat[]{%
  \includegraphics[width=0.45\textwidth]{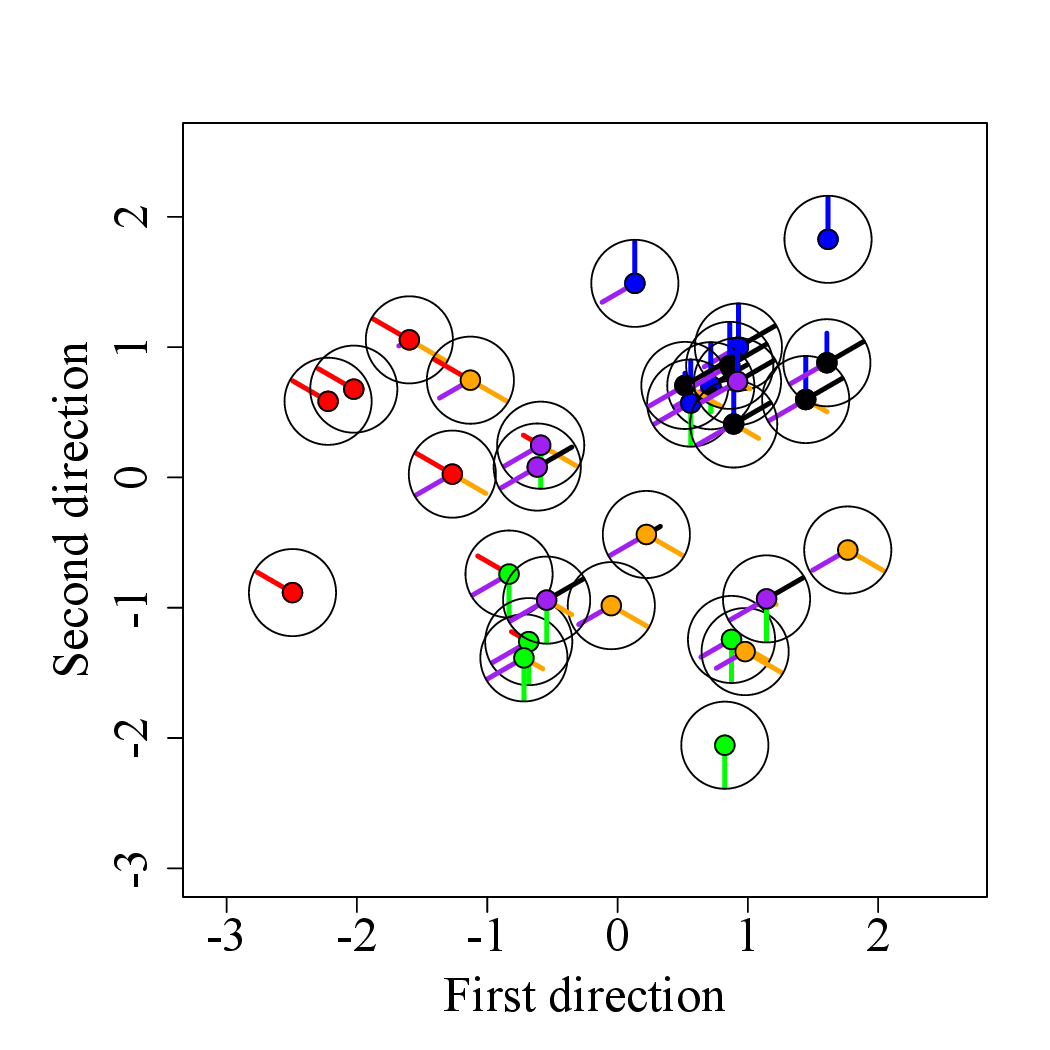}\label{fig:accent_SIR}%
}%
\subfloat[]{%
  \includegraphics[width=0.45\textwidth]{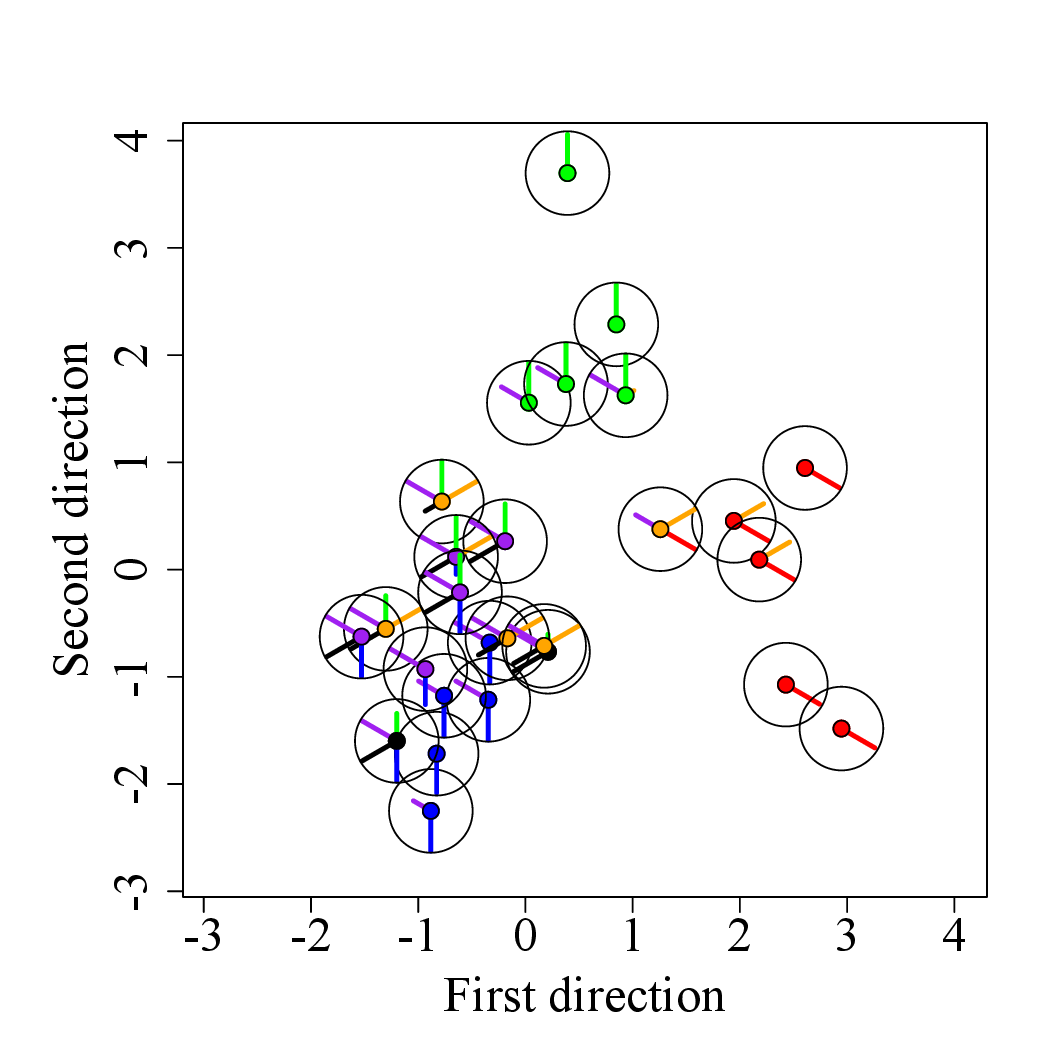}\label{fig:accent_DR}%
}
\caption{Speaker accent data: the steering wheel plots of the first two sufficient predictors. Spanish is red, French is green, German is blue, Italian is black, British is orange, and American is purple. To enhance clarity, five samples are randomly selected for each classified accent. (a) sliced inverse regression predictors; (b) directional regression predictors.}
\label{fig:accent}
\end{figure}

\section*{Supplementary material}
\label{SM}
Supplementary material includes the code for reproducing results and plots represented in this paper. The code is available at \url{https://github.com/Chaegeun/ExactBayesianCredibleSet}.

\bibliographystyle{asa}
\bibliography{paper-ref}

\end{document}